\def\({\left(}
\def\){\right)}
\def\Nx{\nabla_x}
\def\Cal{\mathcal}
\def\eb{\varepsilon}
\def\R {\mathbb{R}}
\def\tilde{\widetilde}
\def\curl{\operatorname{curl}}
\newcommand{\be}{\begin{equation} }
\newcommand{\ee}{\end{equation} }
\def \and{\qquad\text{and}\qquad}
\def\Dt{\partial_t}
\def\Dx{\Delta}
\def\({\left(}
\def\){\right)}
\def\Nx{\nabla}
\def\divv{\operatorname{div}}
\def\eb{\varepsilon}
\def\Cal{\mathcal}
\def\eb{\varepsilon}
\def\R {\mathbb{R}}
\def\<{\left<}
\def\>{\right>}
\def \and{\qquad\text{and}\qquad}
\def\Dt{\partial_t}
\def\Dx{\Delta}
\newtheorem{proposition}{Proposition}[section]
\newtheorem{theorem}[proposition]{Theorem}
\newtheorem{corollary}[proposition]{Corollary}
\newtheorem{lemma}[proposition]{Lemma}
\theoremstyle{definition}
\newtheorem{definition}[proposition]{Definition}
\newtheorem{remark}[proposition]{Remark}
\numberwithin{equation}{section}
\def\be{\begin{equation}}
\def\ee{\end{equation}}
\def\bp{\begin{proof}}
\def\ep{\end{proof}}
\def \au {\rm}
\def \bk {\it}
\def \no#1#2#3 {{\bf #1} (#3), #2.}
\def \eds#1#2#3 {#1, #2, #3.}
\title[Determining modes depending on pressure]
{Restoring the Navier--Stokes dynamics by determining functionals depending on pressure}
\author[A. Ilyin, V. Kalantarov, A. Kostianko  and  S. Zelik]
{ Alexei Ilyin${}^{2,4}$, Varga Kalantarov${}^5$, Anna Kostianko${}^{1,2}$ and Sergey Zelik${}^{1,2,3,4}$}
\address{${}^1$ Zhejiang Normal University, Department of Mathematics, Zhejiang, China}
\address{${}^2$  HSE University, Nizhny Novgorod, Russia.}
\address{${}^3$ University of Surrey, Department of Mathematics, Guildford, UK.}
\address{${}^4$ Keldysh Institute of Applied Mathematics, Moscow, Russia}
\address{${}^5$ Department of mathematics, Ko{\c c} University, Istanbul, Turkey}
\email{ilyin@keldysh.ru}
\email{vkalantarov@ku.edu.tr}
\email{a.kostianko@imperial.ac.uk}
\email{s.zelik@surrey.ac.uk}
\begin{document}

\begin{abstract} For 2D Navier--Stokes equations in a bounded smooth domain, we construct a system of determining functionals which consists of $N$ linear continuous functionals which depend on pressure $p$ only and of one extra functional which is given by the value of vorticity at a fixed point $x_0\in\partial\Omega$.
\end{abstract}

\subjclass[2010]{35B40, 35B42, 37D10, 37L25}
\keywords{Determining functionals, Attractors, Finite-dimensional reduction, Navier--Stokes equations, Carleman estimates}
\thanks{This work is  supported by   Russian
Science Foundation grant (project 23-71-30008)}
\maketitle
\tableofcontents

\section{Introduction}\label{s0}
It is expected that  the limit dynamics generated by dissipative PDEs is often essentially finite-dimensional and can be effectively described by finitely many parameters (the  order parameters in the terminology of I. Prigogine) governed by a system of ODEs describing its evolution in time (which is  referred as an Inertial Form (IF) of the system considered), see \cite{BV92,CV02,ha,MZ08,R01,SY02,T97,Z23} and references therein. A mathematically rigorous interpretation of this conjecture  leads to the concept of an Inertial Manifold (IM). By definition, IM is an invariant smooth  finite-dimensional  submanifold in the phase space  which is exponentially stable and possesses the  exponential tracking property, see \cite{CFNT89,FST88,M-PS88,M91,R94,Z14} and references therein. However, the existence of  an IM requires rather restrictive extra assumptions on the considered system, therefore, the IM may a priori not exist for many interesting equations arising in applications, see \cite{EKZ13,KZ18,KZ17,Z14} for more details. In particular, the existence or non-existence of an IM for 2D Navier--Stokes remains an open problem and for the simplified models of 1D coupled Burgers type systems such a manifold may not exist, see \cite{KZ18,KZ17,Z14}.
\par
By this reason, a number of weaker interpretations of the finite-di\-men\-sio\-na\-lity conjecture are intensively studied during the last 50 years.
One of them is related to the concept of a global attractor and interprets its finite-dimensionality in terms of Hausdorff or/and box-counting dimensions keeping in mind   the Man\'e projection theorem. Indeed, it is well-known that under some relatively weak assumptions
 the dissipative system considered   possesses a compact global attractor of  finite box-counting dimension, see \cite{BV92,MZ08,R01,T97} and references therein.  Then the  corresponding IF can be constructed   by projecting the considered dynamical system to a generic plane of sufficiently large, but finite dimension.  The key drawback of this approach is  the fact that the obtained IF is only H\"older continuous (which is not enough in general even for the uniqueness of solutions of the reduced system of ODEs). Moreover, as recent examples show, the limit dynamics may remain infinite-dimensional despite the existence of a H\"older continuous IF (at least, it may demonstrate some features, like super-exponentially attracting limit cycles, traveling waves in Fourier space, etc., which are impossible in classical dynamics), see \cite{Z14,Z23} for more details.
\par
An alternative, even  weaker approach (which actually has been historically the first, see \cite{FP67,Lad72}) is related to the concept of {\it determining functionals}. By definition, a system of continuous functionals $\Cal F:=\{F_1,\cdots,F_N\}$ on the phase space $H$ is called (asymptotically) determining if for any two trajectories $u_1(t)$ and $u_2(t)$, the convergence
$$
F_n(u_1(t))-F_n(u_2(t))\to0 \ \text{ as $t\to\infty$ for all $n=1,\cdots,N$}
$$
 implies that $u_1(t)\to u_2(t)$ in $H$ as $t\to\infty$.   Thus, in the case where $\Cal F$ exists, the limit behaviour of a trajectory $u(t)$ as $t\to\infty$ is uniquely determined by the behaviour of finitely many scalar quantities $F_1(u(t))$, $\cdots$, $F_N(u(t))$, see e.g. \cite{Cock1,Chu} for more details.
\par
To be more precise, it has been shown in \cite{FP67,Lad72} that, for the case of  Navier--Stokes equations in 2D, a system generated by  first $N$ Fourier modes is asymptotically determining if $N$ is large enough.
Later on the notions of determining nodes, volume elements, etc., have been introduced and various upper bounds for the number $N$ of elements in such systems have been obtained for various   dissipative PDEs (see, e.g., \cite{Chu1,FMRT, FT, FTT,JT,KT} and references therein). More general classes of determining functionals have been introduced in \cite{Cock1,Cock2}, see also \cite{Chu,Chu1}.
\par
Note that the existence of the finite system $\Cal F$ of determining functionals
{\it does not} imply in general that the quantities $F_n(u(t))$, $n\in1,\cdots,N$ obey
 a system of ODEs, therefore, this approach can not  a priori give the proper justification of   the finite-dimensionality conjecture. Moreover, these quantities usually obey only the delay differential equations (DDEs) whose phase space remains infinite-dimensional, see e.g. \cite{KKZ}. Nevertheless, they may be useful for many purposes, for instance, for establishing the controllability of an initially infinite dimensional system by finitely many modes (see e.g. \cite{AT14}), verifying the uniqueness of an invariant measure for random/stochasitc PDEs (see e.g. \cite{kuksin}), etc.  We also mention more recent but very promising applications of determining functionals to data assimilation problems where the values of functionals $F_n(u(t))$ are interpreted as the results of observations and the theory of determining functionals allows us to build new methods of restoring the trajectory $u(t)$ by the results of observations, see \cite{AT14,AT13,OT08,OT03} and references therein.
\par
It   is  worth  noting here that, despite the long history and large number of research papers and interesting results, the determining functionals are essentially less understood in comparison with IMs and global attractors. Indeed, the general strategy which most
 of the papers in the field follow on is to fix some very specific class of functionals (like Fourier modes or the values of $u$ in some prescribed points in  space (the so-called nodes), etc.) and to give as accurate as possible {\it upper} bounds for the number of such functionals which is sufficient to generate a determining system.
 These upper bounds are then expressed in terms of physical parameters of the considered system, e.g. in terms of the  Grashof number when 2D Navier--Stokes equations are considered, see \cite{Chu,OT08} for details.
  However, the described strategy  does not include verification of the sharpness of the obtained estimates, so, for a long time, very little was known about the proper lower
 bounds for the number $N$ of determining functionals.
 \par
  Moreover, the existing estimates often give the bounds for the number $N$ which are compatible to the box-counting dimension of the global attractor, so these one-sided bounds made an impression  that the number $N$ is a posteriori related with the dimension of the attractor and gives a realistic bound for the number of degrees of  freedom of the reduced system.
\par
On the other hand, this is clearly not so in the case of PDEs of one spatial dimension. Indeed, in this case, the minimal number $N$ is expressed in terms of the proper dimension of the set $\Cal R$ of {\it equilibria} of the considered system and, since any equilibrium solves an ODE, this dimension is naturally restricted by the order of the PDE considered and is unrelated with the size or the dimension of the attractor, see \cite{CT,kukavica,KKZ}.
\par
This "paradox" has been resolved in the recent paper \cite{KKZ}, where it was shown that the minimal number $N$ of determining functionals remains  controlled by the proper dimension of the equilibria set $\Cal R$ in the multi-dimensional case as well. The  difference is that, in multi-dimensional case, the equilibria solve an elliptic PDE, where, in contrast to the one-dimensional case, the dimension of $\Cal R$ may be essentially larger than the order of the system and is a priori restricted by the dimension of the global attractor only.  Thus, the "paradox" appears just because the existing methods of constructing determining functionals are not fine enough to distinguish the cases of big attractors and big equilibria sets, so more advanced methods should be invented (for instance, the results of \cite{KKZ} are obtained based on the proper version of Takens delay embedding theorem).  We also mention that, in many examples including 2D Navier--Stokes system, the set $\Cal R$ is finite for generic external forces and in this generic case $N=1$. Some drawback of this result is the fact that we do not know the explicit form of this functional, we only know that it can be chosen to be polynomial in $u$ and that the set of such functionals is generic (prevalent). In other words, arbitrarily small polynomial perturbation of any given functional makes it determining for 2D Navier--Stokes system.
\par
One more interesting question posed in the work \cite{KKZ} is the validity of the analogous results if we are allowed to use not all continuous functionals, but the functionals from some prescribed class only. For example, in the case of 2D Navier--Stokes equations, it would be natural (from both theoretical and practical points of view) to consider the functionals which depend on pressure $p$ only. To the best of our knowledge, there are no results in this direction available in the literature despite the fact that pressure is the most natural and easiest for implementation observable in hydrodynamics and  meteorology.
\par
The main aim of this paper is to give a partial answer to that question. Namely, we consider the 2D Navier--Stokes problem in a bounded smooth domain $\Omega$:
\begin{equation}\label{0.NS}
\Dt u+(u,\Nx)u+\Nx p=\nu\Dx u+g,\ \ \divv u=0,\ \ u\big|_{\partial\Omega}=0,
\end{equation}
where $u$ and $p$ are unknown velocity and pressure respectively, $\nu>0$ is a given kinematic viscosity and $g$ are external forces. The main result of the paper is verifying the existence of the system $\Cal F$ of determining functionals for this problem in the following form:
\begin{equation}\label{0.det}
\Cal F:=\{F_1(p), F_2(p),\cdots, F_N(p)\}\cup\{\curl u\big|_{x=x_0}\},
\end{equation}
where $x_0\in\partial\Omega$ is an arbitrary point on the boundary and the number $N$ is proportional to the fractal dimension of the corresponding global attractor, see Theorem \ref{Th3.1} for the rigorous statement.
\par
The paper is organized as follows. In Section \S\ref{s1}, we briefly recall the known facts about the solutions of the Navier--Stokes problem \eqref{0.NS}, the corresponding  global attractor $\Cal A$ and the estimates for its fractal dimension. We also discuss here how to restore the pressure $p$ by known velocity field $u$ and construct and study the pressure component $\Cal A_p$ of the global attractor which is crucial for the proof of our main result.
\par
In Section \S\ref{s2}, we discuss the theory of determining functionals. The most important for here are the facts that the system of functionals is determining if it is separating (i.e., if it separates different trajectories on the attractor), see \cite{KKZ} and the Man\'e projection theorem which allows us to construct a system of functionals $\{F_1(p),\cdots,F_N(p)\}$ with $N$ being proportional to the fractal dimension of $\Cal A$ which separate the pressure trajectories $p(t)$ on $\Cal A_p$.
\par
In Section \S\ref{s3} we state and prove our main result. Due to the results of previous sections, we only need to verify here that the pressure trajectory $p(t)$, $t\in\R$, which corresponds to the velocity trajectory $u(t)$, $t\in\R$ on the attractor, determines this trajectory in a unique way. i.e., that $p_1(t)\equiv p_2(t)$ for all $t$ implies the equality $u_1(t)=u_2(t)$ for all $t$. Unfortunately, up to the moment, we are unable to prove this statement in full generality (although, we know that it is true for some particular cases, say, for the case where $\Omega$ is a polygon, see \S\ref{s4}), so we have to add one more special functional, namely, the vorticity $\curl u$ of the solution $u$ computed at some fixed point $x_0\in\partial\Omega$. Based on this extra functional, we check that the difference $\bar u(t)=u_1(t)-u_2(t)$ satisfies the corresponding second order parabolic system with overdetermined boundary conditions $\bar u\big|_{\partial\Omega}=\Nx\bar u(t)\big|_{\partial\Omega}=0$ and the proper Carleman type estimate gives us the desired result.
\par
Finally, in Section \S\ref{s4}, we discuss some related questions and open problems which are raised up by our study and which have a substantial independent interest.

\section[Preliminaries I. 2D Navier--Stokes equations:\newline well-posedness, attractors and dimensions]{Preliminaries I. 2D Navier--Stokes equations:\\ well-posedness, attractors and dimensions}\label{s1}
The aim of this section is to recall the standard facts on the 2D Navier--Stokes equations which are necessary for what follows. Since all of them are well-known, we restrict ourselves by their statements, the details can be found e.g., in \cite{BV92,CV02,T97}, see also  references therein.
\par
In a bounded domain $\Omega$ of $\R^2$ with smooth boundary $\partial\Omega$, we consider the classical Navier--Stokes problem:
\begin{equation}\label{1.NS}
\begin{cases}
\Dt u+(u,\Nx)u+\Nx p=\nu\Dx u+g,\  u\big|_{\partial\Omega}=0,\ \  u\big|_{t=0}=u_0,\\
\divv u=0,
\end{cases}
\end{equation}
 where $u=(u_1(t,\mathtt x),u_2(t,\mathtt x))$ and $p=p(t,\mathtt x)$ are unknown velocity and pressure field respectively, $\nu>0$ and $g=(g_1(\mathtt x),g_2(\mathtt x))\in L^2(\Omega)$ are known kinematic viscosity and external forces respectively and $u_0=u_0(\mathtt x)$ is a given initial velocity field. We also recall that, as usual, the inertial term $(u,\Nx)u$ has the form
 $$
 (u,\Nx)u=u_1\partial_{x}u+u_2\partial_{y}u, \ \ \mathtt x=(x,y).
 $$
We recall that the pressure term $\Nx p$ can be removed applying the  Helmholtz--Leray projector $\Pi$ to equation \eqref{1.NS} which gives
\begin{equation}\label{1.Pro}
\Dt u+\Pi (u,\Nx)u=\nu Au+\Pi g,\ \ u\big|_{t=0}=u_0,\ \ u\big|_{\partial\Omega}=0,
\end{equation}
where $A:=\Pi\Dx$ is the Stokes operator. Vice versa, if the velocity field $u$ is known, the pressure term can be restored using the Poisson equation:
\begin{equation}\label{1.P}
\Dx p=-\divv(u,\Nx) u+\divv g,\ \ \partial_np\big|_{\partial\Omega}=\nu\Dx u\big|_{\partial\Omega}+g\big|_{\partial\Omega},
\end{equation}
which is formally obtained from \eqref{1.NS} by taking $\divv$, see e.g. \cite{T97}. Thus, it is enough to find the velocity field solving \eqref{1.Pro}, the pressure will be restored after that by solving \eqref{1.P} or otherwise.
\par
The standard phase space for problem \eqref{1.Pro} is the following one:
\begin{equation}\label{1.phase}
H:=\bigg\{u\in [L^2(\Omega)]^2:\, \divv u=0,\ u.n\big|_{\partial\Omega}=0\bigg\}.
\end{equation}
Namely, $u=u(t,\mathtt x)$ is a weak solution of \eqref{1.NS} if
\begin{equation}\label{1.sp}
u\in C(0,T;H)\cap L^2(0,T; H^1_0(\Omega))
\end{equation}
and for every divergent free test function $\varphi\in C_0^\infty([\R_+\times\Omega)$, we have the identity
\begin{equation}\label{1.weak}
-\int_\R (u,\Dt\varphi)\,dt+\nu\int_\R(\Nx u,\Nx\varphi)\,dt+\int_R((u,\Nx)u,\varphi)\,dt=\int_R(g,\varphi)\,dt.
\end{equation}
Here and below $H^s(\Omega)$ (resp. $H^s_0(\Omega)$) stands for the standard Sobolev space of distributions whose derivatives up to order $s$ belong to the Lebesgue space $L^2(\Omega)$ (resp. the closure of $C_0^\infty(\Omega)$ in the norm of $H^s(\Omega)$) and $(u,v):=\int_\Omega u(\mathtt x).v(\mathtt x)\,d\mathtt x$.
\par
The following theorem is one of the main results in the theory of 2D Navier--Stokes equations.
\begin{theorem}\label{Th1.wp} Let $g\in L^2(\Omega)$ and $u_0\in H$. Then, for any $T>0$,  problem \eqref{1.NS} possesses a unique weak solution $u(t)$ and therefore the solution semigroup
$$
S(t): H\to H,\ \ S(t)u_0:=u(t)
$$
is well-defined in the phase space $H$. Moreover, this semigroup is dissipative, i.e.,
\begin{equation}
\|S(t)u_0\|_H\le Q(\|u_0\|_H)e^{-\alpha t}+Q(\|g\|_{L^2}),
\end{equation}
where $\alpha>0$ and the monotone function $Q$ are independent of $t$, $u_0$ and $g$. Furthermore, this semigroup possesses the parabolic smoothing property, i.e. $S(t)u_0\in H^2(\Omega)$ for all $t>0$ and
\begin{multline}\label{1.dif}
\|S(t)u_0\|_{H^2}\le Q(1/t+\|u_0\|_{H}),\\ \|S(t)u_0^1-S(t)u_0^2\|_{H^2}\le Q(1/t+\|u_0^1\|_H^2+\|u_0^2\|_{H^2})\|u_0^1-u_0^2\|_H
\end{multline}
for $t\in(0,1]$ and
some  monotone function $Q$.
\end{theorem}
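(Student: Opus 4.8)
The plan is to combine the classical Galerkin approximation scheme with the energy and enstrophy a priori estimates, the decisive analytic input being the two-dimensional Ladyzhenskaya inequality $\|v\|_{L^4}^2\le C\|v\|_{L^2}\|\Nx v\|_{L^2}$. First I would construct approximate solutions $u_n$ by projecting the abstract equation \eqref{1.Pro} onto the span of the first $n$ eigenfunctions of the Stokes operator $A$; local solvability of the resulting system of ODEs is automatic. Testing \eqref{1.Pro} with $u$ and using that $\Pi$ is self-adjoint together with the cancellation $((u,\Nx)u,u)=0$ for divergence-free fields vanishing on $\p\Om$, one obtains the energy identity
\[
\frac12\ddt\|u\|_H^2+\nu\|\Nx u\|_{L^2}^2=(g,u).
\]
Combined with the Poincar\'e inequality $\lambda_1\|u\|_H^2\le\|\Nx u\|_{L^2}^2$ and Young's inequality this gives $\ddt\|u\|_H^2+\nu\lambda_1\|u\|_H^2\le C\|g\|_{L^2}^2$, which integrates to the dissipative estimate with $\alpha=\nu\lambda_1/2$ and furnishes the uniform bounds $u_n\in L^\infty(0,T;H)\cap L^2(0,T;H^1_0)$ needed to pass to the limit by the Aubin--Lions compactness lemma.

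For uniqueness I would take two weak solutions $u_1,u_2$, set $\bar u=u_1-u_2$, and test the difference equation with $\bar u$. Writing the nonlinear difference as $(\bar u,\Nx)u_1+(u_2,\Nx)\bar u$, the second term drops out because $((u_2,\Nx)\bar u,\bar u)=0$, while the first is controlled by the Ladyzhenskaya inequality,
\[
|((\bar u,\Nx)u_1,\bar u)|\le C\|\bar u\|_{L^4}^2\|\Nx u_1\|_{L^2}\le\frac\nu2\|\Nx\bar u\|_{L^2}^2+C\|\Nx u_1\|_{L^2}^2\|\bar u\|_H^2.
\]
Absorbing the gradient term into the viscous one yields $\ddt\|\bar u\|_H^2\le C\|\Nx u_1\|_{L^2}^2\|\bar u\|_H^2$, and since $\|\Nx u_1\|_{L^2}^2$ is integrable in time, Gronwall's lemma forces $\bar u\equiv0$; the very same inequality simultaneously gives the Lipschitz dependence on the initial data in the $H$-norm.

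For the parabolic smoothing I would pass to the enstrophy estimate, testing \eqref{1.Pro} with $Au$ to obtain
\[
\frac12\ddt\|\Nx u\|_{L^2}^2+\nu\|Au\|_{L^2}^2=-(\Pi(u,\Nx)u,Au)+(\Pi g,Au).
\]
The nonlinearity is controlled by the two-dimensional interpolation bound $\|(u,\Nx)u\|_{L^2}\le C\|u\|_H^{1/2}\|\Nx u\|_{L^2}\|Au\|_{L^2}^{1/2}$, so Young's inequality absorbs a factor $\tfrac\nu2\|Au\|_{L^2}^2$ and leaves $\ddt\|\Nx u\|_{L^2}^2+\nu\|Au\|_{L^2}^2\le C\|u\|_H^2\|\Nx u\|_{L^2}^4+C\|g\|_{L^2}^2$. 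Since the energy estimate already guarantees $\int_0^T\|\Nx u\|_{L^2}^2\,dt<\infty$, the time-weighted (uniform) Gronwall lemma converts this into the pointwise bound $\|\Nx u(t)\|_{L^2}\le Q(1/t+\|u_0\|_H)$ on $(0,1]$; one further bootstrap of the same type together with the elliptic regularity $\|u\|_{H^2}\le C\|Au\|_{L^2}$ for the Stokes operator then yields the first estimate in \eqref{1.dif}.

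Finally, the Lipschitz estimate in the $H^2$-norm is obtained by bootstrapping the argument for $\bar u$: starting from the $H$-Lipschitz bound, I would test the difference equation successively with $A\bar u$ and then with the next power of $A$ (or, equivalently, exploit the smoothing of the Stokes semigroup in the Duhamel representation of $\bar u$), estimating the two pieces $(\bar u,\Nx)u_1$ and $(u_2,\Nx)\bar u$ of the nonlinear difference by the same 2D inequalities while now invoking the $H^2$-smoothing bounds already established for $u_1$ and $u_2$, and closing by a time-weighted Gronwall argument that produces the $1/t$ factor. I expect this last step to be the main obstacle: the quadratic structure forces one to control the difference of the nonlinearities in a norm one derivative higher than in the uniqueness argument, which consumes the full strength of the smoothing bounds for both solutions and requires careful tracking of the time weights through the uniform Gronwall lemma, whereas the existence, dissipativity and uniqueness steps are entirely standard.
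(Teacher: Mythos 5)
The paper does not prove this theorem but refers to \cite{BV92,T97}, and your proposal is precisely the standard argument given there: Galerkin approximation, the energy and enstrophy estimates with the 2D Ladyzhenskaya inequality, Gronwall for uniqueness, and time-weighted Gronwall plus Stokes elliptic regularity for the $1/t$-smoothing. Your outline is correct and takes essentially the same route as the cited references.
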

For the proof of this theorem, see e.g. \cite{BV92,T97}.
\par
This result allows us to apply the theory of attractors to study the asymptotic behavior of the solution semigroup $S(t):H\to H$. We recall that a set $\Cal A\subset H$ is called a global attractor for the semigroup $S(t)$ if
\par
a) $\Cal A$ is a compact set in $H$;
\par
b) $\Cal A$ is strictly invariant, i.e. $S(t)\Cal A=\Cal A$ for all $t\ge0$;
\par
c) $\Cal A$ attracts the images of all bounded sets as $t\to\infty$, i.e., for every neighbourhood $\Cal O(\Cal A)$ of $\Cal A$ in $H$ and every bounded set $B\subset H$ there exists $T=T(\Cal O,B)$ such that
$$
S(t)B\subset \Cal O(\Cal A)
$$
for all $t\ge T$.
\par
As a corollary of Theorem \ref{Th1.wp} we get.
\begin{corollary}\label{Cor1.attr} Under the assumptions of Theorem \ref{Th1.wp}, the solution semigroup $S(t):H\to H$ associated with Navier--Stokes equation \eqref{1.NS} possesses a global attractor $\Cal A$ which is a compact set in $H\cap H^1_0(\Omega)\cap H^2(\Omega)$. Moreover, the attractor $\Cal A$ is generated by the set $\Cal K\subset C_b(\R,H)$ of all bounded solutions of \eqref{1.Pro} which are defined for all $t\in\R$:
$$
\Cal A=\Cal K\big|_{t=0}.
$$
\end{corollary}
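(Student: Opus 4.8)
The plan is to realize $\Cal A$ as the $\omega$-limit set of a bounded absorbing set and to verify the hypotheses of the standard abstract existence theorem for global attractors (see, e.g., \cite{BV92,T97}), namely dissipativity together with asymptotic compactness. Both ingredients are immediate from Theorem \ref{Th1.wp}. First I would use the dissipative estimate to fix a bounded absorbing ball $\B:=\{u\in H:\|u\|_H\le R\}$ with $R$ larger than $2Q(\|g\|_{L^2})$: by that estimate every trajectory starting from a bounded set enters $\B$ after a finite time and remains there. Second, the parabolic smoothing estimate \eqref{1.dif} shows that the map $S(1)$ sends $\B$ into a bounded subset of $H^2(\Omega)$; since the embedding $H^2(\Omega)\hookrightarrow H$ is compact by the Rellich theorem, $S(1)$ is a compact map, which furnishes the required asymptotic compactness of the semigroup.

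With dissipativity and asymptotic compactness established, the abstract theorem guarantees that $S(t)$ possesses a global attractor given by $\Cal A=\omega(\B)=\bigcap_{s\ge0}\overline{\bigcup_{t\ge s}S(t)\B}$, which is compact in $H$, strictly invariant and attracting. The representation $\Cal A=\Cal K\big|_{t=0}$ is then the usual description of the attractor as the set of initial data of complete bounded trajectories: the inclusion $\Cal K\big|_{t=0}\subset\Cal A$ holds because any bounded complete trajectory is attracted to $\Cal A$ and, being invariant under the flow, must already lie on it; the reverse inclusion follows from strict invariance, which lets one extend any point of $\Cal A$ to a complete bounded trajectory by successively choosing preimages under $S(t)$ inside $\Cal A$.

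It remains to upgrade the topology in which $\Cal A$ is compact. Since $\Cal A$ is bounded in $H$ and strictly invariant, applying the first bound in \eqref{1.dif} with $t=1$ to the identity $\Cal A=S(1)\Cal A$ shows that $\Cal A$ is bounded in $H^2(\Omega)$, so in particular $\Cal A\subset H\cap H^1_0(\Omega)\cap H^2(\Omega)$. To obtain compactness in the $H^2$-norm rather than merely in $H$, I would invoke the Lipschitz bound in \eqref{1.dif}: restricted to the $H$-bounded set $\Cal A$, the map $S(1)$ is Lipschitz continuous from $H$ into $H^2(\Omega)$, hence it carries the $H$-compact set $\Cal A$ onto an $H^2$-compact set, which by strict invariance is again $\Cal A$. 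The one point to handle with care is precisely this transfer of compactness to the stronger norm; once the $H^2$-bound and the $H$-to-$H^2$ continuity of $S(1)$ are in place, the compactness of $\Cal A$ in $H^2$ is automatic. All the analytic content is contained in Theorem \ref{Th1.wp}, so the proof amounts to quoting its estimates in the form appropriate to each step.
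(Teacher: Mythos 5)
Your proposal is correct and follows exactly the standard argument that the paper relies on (it gives no proof, citing \cite{BV92,T97}): dissipativity plus the compact smoothing $S(1):\B\to H^2(\Omega)$ yield the attractor $\Cal A=\omega(\B)$, strict invariance gives the representation via complete bounded trajectories, and the Lipschitz bound in \eqref{1.dif} transfers compactness from $H$ to $H^2(\Omega)$ --- the same mechanism the authors invoke after Theorem \ref{Th1.dim} to identify the fractal dimensions in the two topologies. No gaps.
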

The next important fact is that the attractor $\Cal A$ has the finite fractal dimension. We recall that if $\Cal A$ is a compact set in $H$ then, by the Hausdorff criterion, for every $\eb>0$, it can be covered by finitely many $\eb$-balls in $H$. Denote by $N_\eb(\Cal A,H)$ the minimal number of such balls. Then, the fractal dimension $d_f(\Cal A,H)$ is defined via
$$
d_f(\Cal A,H):=\limsup_{\eb\to0}\frac{\log_2N_\eb(\Cal A,H)}{\log_2\frac1\eb}.
$$
\begin{theorem}\label{Th1.dim} Under the assumptions of Theorem \ref{Th1.wp}, the fractal dimension of the attractor $\Cal A$ is finite:
\begin{equation}\label{1.attr-fin}
\dim_f(\Cal A,H)<\infty.
\end{equation}
Moreover, the fractal dimension of $\Cal A$ in $H^2(\Omega)$ is also finite and coincides with its dimension in $H$:
\begin{equation}
\dim_f(\Cal A,H^2(\Omega))=\dim_f(\Cal A,H)<\infty.
\end{equation}
\end{theorem}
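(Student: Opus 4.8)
\smallskip

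The plan is to deduce both assertions directly from the parabolic smoothing estimate \eqref{1.dif} together with the strict invariance $S(1)\Cal A=\Cal A$ supplied by Corollary \ref{Cor1.attr}, bypassing the volume-contraction/Lyapunov-exponent machinery entirely. First I would fix $t=1$. Since $\Cal A$ is compact in $H$ it is bounded there, and the first inequality in \eqref{1.dif} (evaluated at $t=1$, using $S(1)\Cal A=\Cal A$) shows that $\Cal A$ is bounded in $H^2(\Omega)$ as well. Consequently the monotone factor $Q(1+\|u_0^1\|_H^2+\|u_0^2\|_{H^2})$ in the second line of \eqref{1.dif} is majorized by a single constant $L$ uniformly over $u_0^1,u_0^2\in\Cal A$, so the time-one map $S:=S(1)$ enjoys the uniform smoothing property
\[
\|Su_1-Su_2\|_{H^2}\le L\,\|u_1-u_2\|_H,\qquad u_1,u_2\in\Cal A,
\]
and maps $\Cal A$ onto itself.

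For the finiteness $\dim_f(\Cal A,H)<\infty$ I would run the standard covering argument. The embedding $H^2(\Omega)\cap H\hookrightarrow H$ is compact by the Rellich theorem; hence there is a finite number $M=M(L)$, \emph{independent of the scale} by linear homogeneity of the embedding, such that every ball of $H^2$-radius $2Lr$ can be covered by $M$ balls of $H$-radius $r/2$. Now take a covering of $\Cal A$ by $N_r(\Cal A,H)$ balls $B_H(u_i,r)$. Applying $S$ and using $S\Cal A=\Cal A$, each piece $S(\Cal A\cap B_H(u_i,r))$ has $H^2$-diameter at most $2Lr$ and is therefore covered by $M$ balls of $H$-radius $r/2$. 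This yields the recursion $N_{r/2}(\Cal A,H)\le M\,N_r(\Cal A,H)$, whence $N_{r_0 2^{-k}}\le M^k N_{r_0}$ and $\dim_f(\Cal A,H)\le\log_2 M<\infty$.

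For the equality $\dim_f(\Cal A,H^2)=\dim_f(\Cal A,H)$ I would use that a globally Lipschitz map between metric spaces cannot increase the fractal dimension of its image. On the one hand, the identity $(\Cal A,H^2)\to(\Cal A,H)$ is Lipschitz because $\|\cdot\|_H\le C\|\cdot\|_{H^2}$, giving $\dim_f(\Cal A,H)\le\dim_f(\Cal A,H^2)$. On the other hand, the map $S\colon(\Cal A,H)\to(\Cal A,H^2)$ is Lipschitz by the smoothing inequality above and surjective since $S\Cal A=\Cal A$; hence $\dim_f(\Cal A,H^2)=\dim_f(S\Cal A,H^2)\le\dim_f(\Cal A,H)$. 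Combining the two inequalities with the finiteness already established in $H$ closes the argument.

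The computations are elementary once the framework is in place; the single point that requires care, and the place where I expect the only genuine work, is extracting the \emph{uniform} Lipschitz-in-$H^2$ constant from \eqref{1.dif} — that is, verifying that $\Cal A$ is bounded in $H^2$ so that the $Q$-factor collapses to a fixed $L$ on $\Cal A$. After that the scale-invariant covering count and the Lipschitz monotonicity of $\dim_f$ deliver both statements mechanically.
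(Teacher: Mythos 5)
Your argument is correct, and for the first assertion it takes a genuinely different route from the one the paper points to. The paper does not prove Theorem \ref{Th1.dim} itself: it refers to \cite{T97}, where finiteness of $\dim_f(\Cal A,H)$ is obtained by the volume-contraction method (uniform Lyapunov exponents and trace estimates for the linearized equation), which is also what produces the explicit Grashof-number bound \eqref{1.a-best} quoted afterwards. You instead run the ``smoothing plus compact embedding'' covering argument: the time-one map is Lipschitz from $(\Cal A,\|\cdot\|_H)$ to $H^2(\Omega)$ by \eqref{1.dif} once $\Cal A$ is seen to be bounded in $H^2$, the Rellich embedding makes the covering multiplier $M$ finite, and scale invariance of the ratio $\tfrac{r/2}{2Lr}=\tfrac1{4L}$ gives the recursion $N_{r/2}\le M N_r$ and hence $\dim_f(\Cal A,H)\le\log_2 M$. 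This is a standard and perfectly valid alternative (it is the mechanism behind exponential-attractor constructions); it is more self-contained here, since everything follows from the estimates already stated in Theorem \ref{Th1.wp}, but it yields a much cruder bound --- $\log_2 M$ is controlled by the entropy of a Sobolev ball and cannot recover \eqref{1.a-best}. Since the theorem only asserts finiteness, that is immaterial. For the second assertion your argument is exactly the one the paper sketches: monotonicity of $\dim_f$ under Lipschitz maps applied to the continuous embedding $H^2\hookrightarrow H$ in one direction and to the surjective Lipschitz map $S(1):(\Cal A,H)\to(\Cal A,H^2)$ in the other. The only cosmetic point worth tightening is that in the covering recursion the centers $u_i$ should be taken on $\Cal A$ (or the radii doubled) so that each nonempty piece $S(\Cal A\cap B_H(u_i,r))$ sits inside an $H^2$-ball of radius $2Lr$ centered at a point of $\Cal A$; this is routine and does not affect the dimension.
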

The proof of this theorem can be found e.g. in \cite{T97}. We recall that coincidence of the dimensions in $H$ and $H^2(\Omega)$ follows from the fact that the solution operators $S(t): H\to H^2(\Omega)$ are globally Lipschitz continuous on the attractor for all $t>0$.
\begin{remark} The modern  explicit estimate for the dimension of the attractor $\Cal A$ is given by
\begin{equation}\label{1.a-best}
\dim_f(\Cal A,H)\le \frac{C_{LT}^{1/2}}{2\sqrt2\pi} G,
\end{equation}
where $G:=\frac{\|g\|_{L^2}\operatorname{vol}(\Omega)}{\nu^2}$ is the Grashof number  and $C_{LT}$ is the constant in the appropriate Lieb-Thirring inequality, see e.g. \cite{IKZ-s}. The best up to the moment upper bound for $C_{LT}$ is $C_{LT}\le \frac{R}{2\pi}$, where $R\sim 1.456$, see \cite{FR,FRL}.
\end{remark}
We now return to equation \eqref{1.P} for recovering pressure from velocity. First, without loss of generality, we may assume that
\begin{equation}\label{1.norm}
<p>:=\frac1{\operatorname{vol}(\Omega)}\int_\Omega p(t,\mathtt x)\,d\mathtt x\equiv 0,\ \ \divv g=0.
\end{equation}
Then, since $\Dx u$ and $g$ are divergent free, by the embedding theorem, the boundary data $\partial_n p\big|_{\partial\Omega}$ is well-defined and belongs to $H^{-1/2}(\partial\Omega)$, see \cite{T83} if $u\in H^2(\Omega)\cap H$. Since $H^2(\Omega)\subset C$, $(u,\Nx)u\in H^1(\Omega)$ and $\divv ((u,\Nx)u)\in L^2(\Omega)$, so the Neumann boundary value problem \eqref{1.P} is well-posed in $H^1(\Omega)\cap \{<p>=0\}$. Let us denote by $p=P(u)$ the unique solution of this problem (which has zero mean). Then
\begin{equation}\label{1.2P}
P: H^2(\Omega)\cap H\to H^1(\Omega)
\end{equation}
is a smooth quadratic map. We denote by $\Cal A_p:=P(\Cal A)$ the image of the attractor $\Cal A$ under this map. The next simple reult is, however, crucial for what follows
\begin{corollary}\label{Cor1.p-dim} Under the above assumptions, we have
\begin{equation}
\dim_f(\Cal A_p, H^1(\Omega))\le \dim_f(\Cal A,H^2(\Omega))=\dim_f(\Cal A,H).
\end{equation}
\end{corollary}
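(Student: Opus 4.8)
The plan is to exploit the fact that the fractal (box-counting) dimension of a compact set cannot increase under a Lipschitz map, combined with the Lipschitz continuity of the pressure-recovery map $P$ on the attractor. Since $\Cal A$ is compact, and hence bounded, in $H^2(\Omega)$ by Theorem \ref{Th1.dim}, and since $P\colon H^2(\Omega)\cap H\to H^1(\Omega)$ is a smooth quadratic map by \eqref{1.2P}, the restriction $P\big|_{\Cal A}$ is globally Lipschitz when $\Cal A$ is equipped with the $H^2$-metric and $\Cal A_p$ with the $H^1$-metric. This is the one analytic input; the rest is a scale-counting argument.

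First I would make the Lipschitz bound explicit. Writing the quadratic map in the form $P(u)=P_0+L(u)+B(u,u)$, where $L$ is linear and $B$ bilinear and continuous from $H^2$ into $H^1$ (the constant term $P_0$ comes from $g$ in the boundary datum, the linear term $L$ from $\nu\Dx u$ in the boundary condition, and the quadratic term $B$ from $-\divv(u,\Nx)u$ via \eqref{1.P}), one obtains for $u_1,u_2\in\Cal A$
$$
P(u_1)-P(u_2)=L(u_1-u_2)+B(u_1-u_2,u_1)+B(u_2,u_1-u_2),
$$
whence
$$
\|P(u_1)-P(u_2)\|_{H^1}\le \ell\,\|u_1-u_2\|_{H^2},\qquad \ell:=\|L\|+2\,\|B\|\,R,
$$
with $R:=\sup_{u\in\Cal A}\|u\|_{H^2}<\infty$. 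Thus $P\big|_{\Cal A}$ is Lipschitz with constant $\ell$.

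Next I would run the standard covering argument. Given any cover of $\Cal A$ by $N_\eb(\Cal A,H^2)$ balls of radius $\eb$ in $H^2(\Omega)$, their images under $P$ lie in balls of radius $\ell\eb$ in $H^1(\Omega)$ that together cover $\Cal A_p=P(\Cal A)$; hence
$$
N_{\ell\eb}(\Cal A_p,H^1)\le N_\eb(\Cal A,H^2).
$$
Inserting this into the definition of fractal dimension and passing to the $\limsup$ as $\eb\to0$ (the factor $\ell$ only shifts the scale and cancels in the ratio of logarithms) gives
$$
\dim_f(\Cal A_p,H^1(\Omega))\le \dim_f(\Cal A,H^2(\Omega)).
$$
Combining this with the equality $\dim_f(\Cal A,H^2(\Omega))=\dim_f(\Cal A,H)$ from Theorem \ref{Th1.dim} yields the claim.

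The argument is essentially routine; the only point requiring care is the Lipschitz continuity of $P$ in the correct pair of norms, which is where the smoothness of $P$ as a map $H^2(\Omega)\cap H\to H^1(\Omega)$ (rather than mere continuity in weaker norms) is genuinely used. Since $\Cal A$ is bounded precisely in $H^2(\Omega)$, no further regularity of the attractor is needed, and the monotonicity of box dimension under Lipschitz maps supplies the rest.
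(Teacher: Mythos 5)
Your proof is correct and is exactly the argument the paper intends (the paper leaves it implicit, having just noted that $P:H^2(\Omega)\cap H\to H^1(\Omega)$ is a smooth quadratic map): Lipschitz continuity of $P$ on the $H^2$-bounded attractor, monotonicity of the fractal dimension under Lipschitz maps, and the equality $\dim_f(\Cal A,H^2(\Omega))=\dim_f(\Cal A,H)$ from Theorem \ref{Th1.dim}.
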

\section{Preliminaries II. Determining and separating functionals}\label{s2}
We now turn to determining functional and start with the necessary definitions, see \cite{Chu,Chu1,FT,FTT,KKZ} and references therein for more details.
\begin{definition}\label{Def2.det} Let $S(t):H\to H$, $t\ge0$, be a semigroup acting in a metric space $H$. Then, a system $\Cal F=\{F_1,\cdots,F_n\}$ of continuous functionals $F_i:H\to\R$ is called asymptotically determining if for any two trajectories $u_i(t):=S(t)u_i^0$, $i=1,2$, the convergence
$$
\lim_{t\to\infty}(F_i(u_1(t))-F(u_2(t)))=0,\ \ i=1,\cdots,n
$$
implies that $\lim_{t\to\infty}d(u_1(t),u_2(t))=0$.
\par
Assume in addition that $S(t)$ possesses a global attractor $\Cal A$ in $H$. Then a system $\Cal F$ is called separating if for any two complete trajectories $u_1(t)$ and $u_2(t)$, $t\in\R$, belonging to the attractor, the equalities
\begin{equation}\label{2.sep}
F_i(u_1(t))\equiv F_i(u_2(t)),\ \ \text{for all $t\in\R$ and $i=1,\cdots,n$}
\end{equation}
imply that $u_1(t)\equiv u_2(t)$.
\end{definition}
\begin{proposition}\label{Prop2.sd} Let the semigroup $S(t):H\to H$ possess a global attractor $\Cal A$ in $H$ and let the operators $S(t)$ be continuous for every $t\ge0$. Then any separating system $\Cal F=\{F_1,\cdots,F_n\}$ of functionals is asymptotically determining.
\end{proposition}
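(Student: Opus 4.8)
The plan is to argue by contradiction, converting a hypothetical failure of the asymptotically determining property into a pair of complete trajectories \emph{on the attractor} on which every functional takes identical values, which directly contradicts the separating property. The only genuine work is to manufacture those limiting complete trajectories, and for that I would lean on the two standing hypotheses: compactness (and strict invariance) of $\Cal A$, and continuity of each $S(t)$.

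First I would assume that $\Cal F$ is separating but fails to be asymptotically determining. Then there exist $u_1^0,u_2^0\in H$ whose trajectories $u_i(t):=S(t)u_i^0$ satisfy $F_i(u_1(t))-F_i(u_2(t))\rw 0$ as $t\rw\infty$ for every $i$, while $d(u_1(t),u_2(t))\not\rw 0$. Negating the convergence produces $\eb_0>0$ and a sequence $t_k\rw\infty$ with $d(u_1(t_k),u_2(t_k))\ge\eb_0$ for all $k$. The target is to contradict this last inequality in the limit.

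The heart of the argument is to extract complete trajectories from the time-shifts $u_i^k(\cdot):=u_i(\cdot+t_k)$. Fix $M>0$; for $k$ so large that $t_k\ge M$ the semigroup property gives $u_i^k(t)=S(t+M)\,\xi_i^{k,M}$ for all $t\ge -M$, where $\xi_i^{k,M}:=S(t_k-M)u_i^0$. Because $\Cal A$ attracts the bounded orbit and $t_k-M\rw\infty$, the distance $\dist(\xi_i^{k,M},\Cal A)\rw 0$; choosing for each $k$ a nearest point of $\Cal A$ and using compactness of $\Cal A$ yields a subsequence along which $\xi_i^{k,M}\rw\xi_i^M\in\Cal A$. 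Here the continuity of $S(t+M)$ enters decisively: $u_i^k(t)=S(t+M)\xi_i^{k,M}\rw S(t+M)\xi_i^M=:v_i(t)$ for every $t\ge -M$. Running $M=1,2,\dots$ with nested subsequences and a diagonal extraction produces $v_1,v_2$ defined for all $t\in\R$ with $u_i^k(t)\rw v_i(t)$ pointwise; strict invariance gives $v_i(t)=S(t+M)\xi_i^M\in S(t+M)\Cal A=\Cal A$, so each $v_i$ is a complete trajectory lying on $\Cal A$, and the identity $v_i(t)=S(t-s)v_i(s)$ survives the limit by continuity of $S$.

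It then remains to close the loop almost formally. For each fixed $t$ one has $t+t_k\rw\infty$, so the assumed convergence gives $F_i(u_1^k(t))-F_i(u_2^k(t))=F_i(u_1(t+t_k))-F_i(u_2(t+t_k))\rw 0$, while continuity of $F_i$ together with $u_i^k(t)\rw v_i(t)$ gives $F_i(u_1^k(t))-F_i(u_2^k(t))\rw F_i(v_1(t))-F_i(v_2(t))$. Hence $F_i(v_1(t))\equiv F_i(v_2(t))$ for all $t\in\R$ and all $i$, so the separating property forces $v_1\equiv v_2$; in particular $v_1(0)=v_2(0)$. But $v_i(0)=\lim_k u_i(t_k)$, whence $d(v_1(0),v_2(0))=\lim_k d(u_1(t_k),u_2(t_k))\ge\eb_0>0$, a contradiction. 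I expect the extraction in the third paragraph to be the only delicate point — guaranteeing that a single limiting trajectory exists simultaneously for all $t\in\R$, is genuinely a complete trajectory, and actually lands on $\Cal A$ — and this is precisely where compactness and strict invariance of the attractor and the continuity of $S(t)$ are indispensable.
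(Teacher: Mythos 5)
Your proof is correct. The paper does not reproduce an argument for Proposition \ref{Prop2.sd} (it defers to \cite{KKZ}), but what you wrote — negating the determining property, shifting in time, using attraction, compactness of $\Cal A$, strict invariance and continuity of the $S(t)$ together with a diagonal extraction to produce two complete trajectories on the attractor on which all $F_i$ coincide, and then contradicting separation at $t=0$ via $d(v_1(0),v_2(0))\ge\eb_0$ — is precisely the standard proof of this statement, with all the delicate points (consistency of the limits across different $M$, membership of $v_i(t)$ in $\Cal A$, the semigroup identity for the limit trajectory) handled correctly.
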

For the proof of this result see e.g. \cite{KKZ}. The example of an asymptotically determining system which is not separating is also given there.
\begin{remark} Although the notion of an asymptotically determining functionals is formally a bit weaker than the separating functionals, the difference does not look essential and the existing examples/counterexamples look rather artificial. On the other hand, verifying the separation property is substantially simpler since the solutions on the attractor usually have better regularity properties than arbitrary semi-trajectories. In addition, we may use arguments related with Carleman estimates for that purposes which is crucial for what follows. For these reasons, we will consider only separating functionals in the sequel.
\end{remark}
\begin{remark} Recall that separating functionals should separate the equilibria $\Cal R\subset\Cal A$ of the semigroup $S(t)$. This gives us a natural lower bound for the number $n$ in any separating or asymptotically determining system:
\begin{equation}\label{2.emb}
n\ge \dim_{emb}(\Cal R,H),
\end{equation}
where the embedding dimension on the right-hand side is, by definition, the least number $d\in\mathbb N$ such that there exists a continuous embedding of $\Cal R$ to $\R^d$. In particular, for many cases, the set $\Cal R$ of equilibria is generically discrete, so its embedding dimension is one. More surprising is that, for wide classes of PDEs, generically this estimate is sharp, so the properly chosen single  functional is separating/asymptotically determining. This result is based on the Takens delay embedding theorem and is proved in \cite{KKZ}.
\end{remark}
Unfortunately, we are not able to verify such a result for functionals depending on pressure only, so we have to proceed in a more standard way using the fractal dimension of a global attractor as a natural upper bound. This approach is based on the Man\'e projection theorem.
\begin{theorem} Let $H$ be a Hilbert space and let $\Cal B$ be a compact subset of $H$ such that $\dim_f(\Cal B,H)<d$. Then, for a generic (prevalent) set of $2d$-dimensional subspaces $V$ of $H$, the orthogonal projector $P_V$ to the subspace $V$ is one-to-one on $\Cal B$.
\end{theorem}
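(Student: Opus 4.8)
The plan is to reduce the injectivity of the orthogonal projection to a statement about a single difference set and then to run a measure‑theoretic covering argument governed by the fractal dimension. First I would observe that, since $P_V$ is linear, it is one‑to‑one on $\Cal B$ if and only if it kills no nonzero difference of points of $\Cal B$, i.e.
\[
V^\perp\cap(\Cal B-\Cal B)=\{0\},
\]
where $\Cal B-\Cal B:=\{u-v\st u,v\in\Cal B\}$ and $V^\perp=\ker P_V$ has codimension $2d$. Writing $D:=\Cal B-\Cal B$, this set is compact as the image of the compact set $\Cal B\times\Cal B$ under the Lipschitz map $(u,v)\mapsto u-v$, and, since neither products nor Lipschitz images increase box dimension, $\dim_f(D,H)\le 2\dim_f(\Cal B,H)<2d$. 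This is exactly where the factor $2d$ comes from: the target dimension must strictly exceed the dimension of the difference set.

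Next I would recast injectivity as a genericity statement about linear maps. Identifying a $2d$-dimensional subspace $V$ with orthonormal basis $e_1,\dots,e_{2d}$ with the map $L_Vx:=(\l x,e_1\r,\dots,\l x,e_{2d}\r)\in\R^{2d}$, one has $\|P_Vx\|=|L_Vx|$, so $P_V$ is one‑to‑one on $\Cal B$ precisely when $L_V$ is. Thus it suffices to show that, in the sense of prevalence on the space $\LL(H,\R^{2d})$ of bounded linear maps, almost every $L$ satisfies $Lw\ne0$ for all $w\in D\setminus\{0\}$; the identification between tight frames and subspaces then transfers prevalence to the Grassmannian of $2d$-planes. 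As a probe witnessing prevalence I would use the finite‑dimensional family of maps acting through the first $n$ coordinates of a fixed orthonormal basis of $H$, with $Q_n$ the associated orthogonal projection, and reduce, for an arbitrary fixed $L$, to estimating the Lebesgue measure of those finite‑rank perturbations that annihilate some $w\in D$.

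The core is a quantitative covering estimate controlled by $\dim_f(D,H)<2d$. For a fixed $w$ with $Q_nw\ne0$, the perturbations $e$ with $(L+e)w=0$ impose $2d$ independent linear conditions, so the measure of the ``$\eb$-degenerate'' set $\{e\st|(L+e)w|\le\eb\}$ inside a bounded region is of order $(\eb/|Q_nw|)^{2d}$. I would then split $D$ into dyadic shells $|Q_nw|\sim2^{-j}$, cover each shell at scale $r$ by $N(r)\lesssim r^{-s}$ balls with some $s<2d$, and bound the measure of perturbations annihilating some $w$ in a given ball by its angular width $(r\,2^{j})^{2d}$; summing over the cover yields a bound of order $r^{2d-s}$ for the perturbations annihilating some $w\in D$ with $|Q_nw|\ge\delta$, which tends to $0$ as $r\to0$ thanks to the strict inequality $s<2d$. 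A diagonal argument over $\delta=1/k$ and $n\to\infty$ then removes the remaining degenerate directions. The main obstacle is precisely the infinite‑dimensional setting: in finite dimensions the balance of the covering number $r^{-s}$ against the angular measure $r^{2d}$ is routine, but making ``generic set of subspaces'' precise as prevalence, choosing the finite‑dimensional probe, and controlling the tail of difference vectors $w$ whose projection $Q_nw$ is small are the delicate steps that force the limiting argument in $n$ and use the finiteness of $\dim_f(D,H)$ in an essential way.
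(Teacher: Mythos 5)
The paper does not actually prove this theorem: it is quoted as a classical result and the reader is referred to Robinson's book \cite{R11}, where the proof is precisely the Hunt--Kaloshin prevalence argument you outline (pass to the difference set $\Cal B-\Cal B$ of upper box dimension $<2d$, estimate the measure of finite-rank perturbations of a linear map $L:H\to\R^{2d}$ that nearly annihilate a given $w$, run the covering/summation at scale $r$ using $N(r)\lesssim r^{-s}$ with $s<2d$, and use compactness of $\Cal B-\Cal B$ to keep $\|Q_nw\|$ bounded below before taking the countable union over $\delta=1/k$). Your sketch is therefore essentially the same approach as the paper's cited source and is correct as an outline, with the two points you yourself flag --- the uniform lower bound on $\|Q_nw\|$ via compactness and the transfer of prevalence from $\LL(H,\R^{2d})$ to subspaces/orthoprojectors --- being exactly the steps that need care in a full write-up.
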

For the proof of this classical result and related topics, see \cite{R11}.
\par
Let $V$ be such a generic subspace with base $\{e_k\}_{k=1}^{2d}$. Then
$$
P_Vu=\sum_{k=1}^{2d}(u,e_k)e_k
$$
and the system of linear functionals $F_k(u):=(u,e_k)$ allow us to distinguish  points of $\Cal B$ (in the sense that $F_k(u_1)=F_k(u_2)$ for all $k=1,\cdots,2d$ if and only if $u_1=u_2$). This gives us the following simple, but important result.
\begin{corollary}\label{Cor2.sep} Let the semigroup $S(t):H\to H$ acting on a Hilbert space $H$ possess a global attractor whose fractal dimension $\dim_f(\Cal A,H)<d$. Let $V$ be the $2d$ dimensional subspace of $H$ such that the corresponding orthoprojector is one-to-one on the attractor and let $\{e_k\}_{k=1}^{2d}$ be the orthonormal base of $V$. Then the system of functionals  $\Cal F=\{F_i\}_{i=1}^{2d}$ defined by $F_i(u):=(u,e_i)$ is separating on the attractor.
\end{corollary}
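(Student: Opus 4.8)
The plan is to reduce the separation property, which a priori concerns whole trajectories, to the pointwise injectivity of the Man\'e projector $P_V$ on the attractor, which is exactly what the preceding Man\'e projection theorem supplies. First I would note that each $F_i(u):=(u,e_i)$ is a continuous (linear) functional on $H$, so $\Cal F$ is indeed an admissible system in the sense of Definition \ref{Def2.det}. Then I would fix two complete trajectories $u_1(t)$ and $u_2(t)$, $t\in\R$, lying on the attractor $\Cal A$ and assume that $F_i(u_1(t))\equiv F_i(u_2(t))$ for all $t\in\R$ and all $i=1,\cdots,2d$. Freezing the time variable at an arbitrary $t$, both $u_1(t)$ and $u_2(t)$ are points of $\Cal A$, so the hypothesis may be read as a statement about two fixed points of the attractor.

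The key step is to rewrite the coincidence of the functionals as the coincidence of the projections. Since $\{e_k\}_{k=1}^{2d}$ is an orthonormal base of $V$ and $F_i(u)=(u,e_i)$, the equalities $(u_1(t),e_i)=(u_2(t),e_i)$, $i=1,\cdots,2d$, say precisely that $u_1(t)$ and $u_2(t)$ have the same coordinates along the base of $V$, i.e.
\[
P_Vu_1(t)=\sum_{k=1}^{2d}(u_1(t),e_k)e_k=\sum_{k=1}^{2d}(u_2(t),e_k)e_k=P_Vu_2(t).
\]
Because $V$ was chosen so that $P_V$ is one-to-one on $\Cal A$ and both $u_1(t),u_2(t)\in\Cal A$, injectivity forces $u_1(t)=u_2(t)$. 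As $t\in\R$ was arbitrary, this yields $u_1(t)\equiv u_2(t)$, which is the separation property claimed.

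I do not anticipate any genuine obstacle here: the whole analytic content of the statement is already packaged into the Man\'e projection theorem, whose applicability is guaranteed by the hypothesis $\dim_f(\Cal A,H)<d$, and the existence of the required $2d$-dimensional subspace $V$ is part of its conclusion. The only points requiring care are the elementary identification of ``equal functionals'' with ``equal projections'' via the orthonormal base, and the fact that the evaluation may be carried out separately at each time $t$, so that the pointwise injectivity of $P_V$ on $\Cal A$ can be invoked. Thus the proof is essentially a direct specialization of the theorem, and the corollary follows.
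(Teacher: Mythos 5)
Your argument is correct and coincides with the paper's own (essentially implicit) justification: the paper derives the corollary directly from the observation that $P_Vu=\sum_{k=1}^{2d}(u,e_k)e_k$, so equality of the functionals at each time $t$ means equality of the projections, and injectivity of $P_V$ on $\Cal A$ gives $u_1(t)=u_2(t)$ pointwise in $t$. Nothing further is needed.
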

Note that, in contrast to determining ones,  the concept of separating on $\Cal B$ ( where $\Cal B$ is a compact set) functionals requires only that the functions $u(t)\in \Cal B$ for all $t\in\R$, and does not require them to be the trajectories of a semigroup. We utilize this observation in the following way.
\begin{definition}\label{Def2.p-sep} Let the assumptions of Theorem \ref{Th1.dim} hold and let $\Cal A$ be the global attractor of the solution semigroup $S(t):H\to H$ associated with the Navier--Stokes equation. Let also $\Cal A_p$ be its pressure component defined via $\Cal A_p:=P(\Cal A)$, see \eqref{1.2P} and let $\Cal K_p\subset C_{loc}(\R,\Cal A_p)$ be the set of pressure components of all solutions, belonging to the attractor $\Cal A$:
\begin{multline}
\Cal K_p:=\{p(t)\in \Cal A_p,\ t\in\R\,: \exists u(t)\in\Cal A, \ t\in\R,\ u\ \text{is a complete}\\\text{ trajectory of the solution semigroup $S(t)$}\}.
\end{multline}
We say that the system $\Cal F=\{F_1,\cdots,F_n\}$, $F_i:\Cal A_p$, $i=1,\cdots,n$ is pressure separating on the attractor if for every two $p_1,p_2\in\Cal K_p$, the equalities
\begin{equation}\label{2.psep}
F_i(p_1(t))\equiv F_i(p_2(t)), \ i=1,\cdots,n, \ t\in\R,
\end{equation}
imply that $p_1(t)\equiv p_2(t)$ for all $t\in\R$.
\end{definition}
Then, analogously to Corollary \ref{Cor2.sep}, we have the following result.
\begin{corollary}\label{Cor2.p-sep} Let the assumptions of Corollary \ref{Cor1.p-dim} hold and let
$$
\dim_f(\Cal A_p,H^1(\Omega))<d.
$$
 Then there exists a system of pressure separating functionals $\Cal F$ with the number of functionals which does not exceed $2d$.
\end{corollary}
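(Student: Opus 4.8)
The plan is to deduce this directly from the Man\'e projection theorem applied to $\Cal A_p$, mimicking the proof of Corollary~\ref{Cor2.sep} but replacing the semigroup trajectories by the pressure curves $p(t)\in\Cal A_p$ and using that the separating property, being formulated on a compact set, is a purely pointwise-in-time condition.

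First I would verify the two hypotheses needed to run the Man\'e theorem in the Hilbert space $H^1(\Omega)$. By Corollary~\ref{Cor1.attr} the attractor $\Cal A$ is compact in $H\cap H^2(\Omega)$, and the pressure-recovery map $P\colon H^2(\Omega)\cap H\to H^1(\Omega)$ of \eqref{1.2P} is continuous (in fact a smooth quadratic map); hence $\Cal A_p=P(\Cal A)$ is compact in $H^1(\Omega)$. Together with Corollary~\ref{Cor1.p-dim} and the standing hypothesis this gives $\dim_f(\Cal A_p,H^1(\Omega))<d$. Applying the Man\'e projection theorem to $\Cal B=\Cal A_p$ then furnishes a $2d$-dimensional subspace $V\subset H^1(\Omega)$ whose orthogonal projector $P_V$ is one-to-one on $\Cal A_p$.

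Next, choosing an orthonormal base $\{e_k\}_{k=1}^{2d}$ of $V$ in the $H^1(\Omega)$ inner product, I set $F_i(p):=(p,e_i)_{H^1(\Omega)}$; these are $2d$ continuous functionals depending only on the pressure, and $P_Vp=\sum_{i=1}^{2d}F_i(p)e_i$. Consequently, for $q_1,q_2\in\Cal A_p$ the equalities $F_i(q_1)=F_i(q_2)$ for all $i$ force $P_Vq_1=P_Vq_2$, whence $q_1=q_2$ by injectivity. To finish, take any $p_1,p_2\in\Cal K_p$ satisfying \eqref{2.psep}: for each fixed $t\in\R$ the values $p_1(t),p_2(t)$ lie in $\Cal A_p$ and satisfy $F_i(p_1(t))=F_i(p_2(t))$ for all $i$, so the previous line yields $p_1(t)=p_2(t)$; since $t$ is arbitrary, $p_1\equiv p_2$, which is the pressure-separating property. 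There is no substantial obstacle here---the whole argument is a packaging of already-proven facts---and the only point deserving care is that the injectivity of $P_V$ is invoked separately at each instant $t$. This step is legitimate precisely because separation on the compact set $\Cal A_p$ is a non-dynamical notion, as stressed in the remark preceding Definition~\ref{Def2.p-sep}; this is exactly what lets us bypass the semigroup structure on $\Cal K_p$, which is not available for the pressure.
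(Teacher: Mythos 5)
Your argument is correct and is exactly the route the paper intends: the paper states this corollary "analogously to Corollary \ref{Cor2.sep}", i.e.\ apply the Man\'e projection theorem to the compact set $\Cal A_p\subset H^1(\Omega)$ (compactness following from Corollary \ref{Cor1.attr} and continuity of $P$), take the coordinate functionals of the injective orthoprojector, and conclude pointwise in $t$ using that separation on a compact set is a non-dynamical notion. Nothing is missing.
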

\begin{remark} For a given system $\Cal F$ of   functionals depending on pressure, we may construct a system of functionals $\tilde {\Cal F}$, by the following natural formula: $\tilde F_i(u):=F_i(P(u))$. If this system $\tilde{\Cal F}$ is separating on the attractor $\Cal A$, then $\Cal F$ is obviously pressure separating. The opposite statement is a priori not true since we do not know whether the one-to-one correspondence between pressure and velocity components holds.
\end{remark}

\section{Main result}\label{s3}
We are now ready to state and prove our main result concerning the determining functionals depending on pressure. We start with a system of pressure separating functionals $\Cal F=\{F_1(p),\cdots, F_{2d}(p)\}$ constructed in Corollary \ref{Cor2.p-sep} and the corresponding lifting $\tilde{\Cal F}$ to the velocity phase space. Unfortunately, we do not know whether or not such a system will be separating on the attractor $\Cal A$ of the Navier--Stokes problem considered, so we need to add one more specific functional to this system related with the value of vorticity at some point on the boundary and then the constructed system will be separating and, therefore, asymptotically determining. Namely,
let us fix a solution $G\in H\cap H^2$ of the stationary Navier--Stokes problem
\begin{equation}\label{3.ST}
(G,\Nx)G+\Nx q=\nu\Dx G+g,\ \ \divv G=0,\ \ G\big|_{\partial\Omega}=0
\end{equation}
and a point $\mathtt x_0\in\partial\Omega$ and consider the functional
\begin{equation}\label{3.vor}
\Omega(u):=(\curl u-\curl G)\big|_{\mathtt x=\mathtt x_0},\ \ \curl V:=\partial_{y}V_1-\partial_{x}V_2.
\end{equation}
\begin{remark} Note that we do not have enough regularity of the solution $u(t)$ (even belonging to the attractor) in order to define the trace of the vorticity at a point on the boundary. Indeed, if $g\in L^2(\Omega)$, we can prove only that $u(t)\in H^2(\Omega)$, so $\curl u(t)\in H^1(\Omega)$ which is not embedded to $C(\bar\Omega)$. But the difference $u(t)-G$ is more regular for all $t>0$, say, $u(t)-G\in H^3(\Omega)$, so $\curl(u-G)\in H^2(\Omega)\subset C(\bar\Omega)$, so the functional \eqref{3.vor} is well-defined. This is the only reason to introduce the function $G$. Alternatively, since $\curl u\big|_{\partial\Omega}\in L^p(\partial\Omega)$ for all $p<\infty$, we may use some mean values of it on the boundary instead of subtracting the solution of the stationary equation.
\end{remark}
 The following theorem can be considered as the main result of the paper.
\begin{theorem}\label{Th3.1} Let the assumptions of Corollary \ref{Cor2.p-sep} be satisfied and let
$\Cal F=\{F_1(p),\cdots,F_{2d}(p)\}$ be the system of pressure separating functionals constructed there. Then, the system of functionals
\begin{equation}\label{3.det}
\bar{\Cal F}:=\{F_1(P(u)),\cdots,F_{2d}(P(u)),\Omega(u)\}
\end{equation}
is separating on the global attractor $\Cal A$ of the initial Navier--Stokes problem and, therefore, is asymptotically determining.
\end{theorem}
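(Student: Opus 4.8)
The plan is to show that the full system $\bar{\Cal F}$ separates complete trajectories on the attractor $\Cal A$. So suppose $u_1(t)$ and $u_2(t)$ are two complete bounded trajectories in $\Cal K$ with $F_i(P(u_1(t)))\equiv F_i(P(u_2(t)))$ for $i=1,\cdots,2d$ and $\Omega(u_1(t))\equiv\Omega(u_2(t))$ for all $t\in\R$. Since the functionals $\{F_i\}$ are pressure separating on $\Cal A_p$ by Corollary~\ref{Cor2.p-sep}, the first batch of equalities immediately forces $p_1(t)\equiv p_2(t)$, where $p_j(t):=P(u_j(t))$ is the pressure component of the trajectory $u_j$. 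Thus the entire problem reduces to a \emph{backward uniqueness / unique continuation} statement: if two trajectories on the attractor share the same pressure for all time and the same boundary vorticity at the single point $\mathtt x_0$, then $u_1\equiv u_2$.

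Next I would write the equation for the difference $\bar u:=u_1-u_2$ and $\bar p:=p_1-p_2\equiv 0$. Subtracting the two copies of \eqref{1.NS} and using $\Nx\bar p=0$ gives a linear (in $\bar u$) second-order parabolic system
\begin{equation}
\Dt\bar u-\nu\Dx\bar u+(u_1,\Nx)\bar u+(\bar u,\Nx)u_2=0,\ \ \divv\bar u=0,\ \ \bar u\big|_{\partial\Omega}=0,
\end{equation}
where the coefficients $u_1,u_2$ and $\Nx u_2$ are bounded and smooth since the trajectories lie on the attractor. The crucial gain from $\bar p\equiv 0$ is that taking $\divv$ of this system and using $\divv\bar u=0$ makes the pressure drop out, but more importantly the original momentum equation \eqref{1.NS} shows that on the boundary $\nu\Dx\bar u=-\Nx\bar p$ has vanishing normal component there, and combined with the vorticity functional $\Omega$ one expects to upgrade $\bar u\big|_{\partial\Omega}=0$ to the \emph{overdetermined} boundary condition $\bar u\big|_{\partial\Omega}=\Nx\bar u\big|_{\partial\Omega}=0$ flagged in the introduction. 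Concretely, on the fixed boundary $\Nx\bar u$ is determined by its tangential and normal parts; the tangential derivative of the (zero) trace vanishes automatically, and the remaining scalar degree of freedom along $\partial\Omega$ is exactly controlled by the boundary vorticity, which the extra functional $\Omega(u)$ pins down at $\mathtt x_0$. Propagating this along the boundary using $\bar p\equiv0$ (so that $\partial_n$-type data match) should give full vanishing of $\Nx\bar u$ on all of $\partial\Omega$.

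Once the overdetermined conditions $\bar u\big|_{\partial\Omega}=\Nx\bar u\big|_{\partial\Omega}=0$ are in hand, the conclusion $\bar u\equiv 0$ follows from a Carleman estimate for the parabolic operator $\Dt-\nu\Dx$ with lower-order terms, applied near the boundary: Cauchy data vanishing on $\partial\Omega$ together with such a weighted estimate force $\bar u$ to vanish in a boundary collar, after which a standard interior unique-continuation argument (again Carleman-based, or backward uniqueness for the Stokes-type operator) propagates the vanishing to all of $\Omega$ for every $t$.

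The main obstacle, and the technical heart of the argument, is the derivation of the \emph{second} boundary condition $\Nx\bar u\big|_{\partial\Omega}=0$ from the single scalar datum $\Omega(u)$ at one point together with $\bar p\equiv 0$; extracting a full codimension-one worth of boundary information from essentially one equation plus one point value is delicate and is where the structure of the Navier--Stokes system (the precise Neumann condition in \eqref{1.P} for the pressure) must be exploited, and where the Carleman machinery has to be set up with weights adapted to the geometry near $\partial\Omega$. Matching the regularity (the trajectories are a priori only in $H^2$, whereas pointwise vorticity traces and Carleman estimates want more smoothness) is the reason the function $G$ and the regularized difference $u-G\in H^3$ were introduced, and keeping track of this regularity throughout is the bookkeeping one must not drop.
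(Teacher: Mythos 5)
Your overall architecture matches the paper's: reduce to $p_1\equiv p_2$ via the pressure separating system, pass to the linear parabolic system \eqref{3.3} for $\bar u=u_1-u_2$ (the pressure gradient drops out since $\bar p\equiv0$), upgrade the Dirichlet condition to the overdetermined condition $\bar u\big|_{\partial\Omega}=\partial_n\bar u\big|_{\partial\Omega}=0$, and finish by a Carleman-type argument (the paper extends $\bar u$ by zero across $\partial\Omega$ and invokes the observability estimate \eqref{3.obs} of Duyckaerts--Zhang--Zuazua on the enlarged domain; your boundary-Carleman-plus-interior-unique-continuation variant is an acceptable substitute for this last step).

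However, there is a genuine gap at what you yourself call ``the technical heart of the argument'': you never actually derive $\partial_n\bar u\big|_{\partial\Omega}=0$ from the single point value \eqref{3.vor0}, you only assert that the point datum ``pins down'' the remaining scalar degree of freedom and that it ``should'' propagate along the boundary. A single scalar at one point cannot control a function of the arclength parameter $s$ unless that function is shown to satisfy a rigid constraint, and identifying this constraint is precisely the content of the paper's Lemma \ref{Lem1.2}. Concretely: one collects the six boundary relations \eqref{3.8} (namely $\Dx v=\Dx w=0$ from the equation, $\partial_x\divv\bar u=\partial_y\divv\bar u=0$, and $\partial_{\tau\tau}v=\partial_{\tau\tau}w=0$ from the Dirichlet condition), solves them for all second-derivative traces in terms of the normal derivatives, and deduces that $X(s):=\partial_n v$, $Y(s):=\partial_n w$ obey the Frenet--Serret-type system $X'=\kappa Y$, $Y'=-\kappa X$ along $\partial\Omega$. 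Together with $\divv\bar u=0$ this forces $\partial_n\bar u(t)\big|_{\partial\Omega}=R(t)\tau(s)$, i.e.\ the boundary vorticity $\curl\bar u\big|_{\partial\Omega}=R(t)$ is \emph{constant in $s$}; only then does the one-point condition \eqref{3.vor0} kill $R(t)$ and hence the entire normal derivative. Note also that the correct boundary information extracted from the momentum equation is the full vanishing $\Dx\bar u\big|_{\partial\Omega}=0$ (all components, since $\bar u$, $u_1$ and $\Dt\bar u$ all vanish on $\partial\Omega$ and $\Nx\bar p\equiv0$), not merely the vanishing of a normal component as you state; without the full set of six relations the ODE system above cannot be closed. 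As written, your proposal correctly frames the problem but leaves its decisive step unproved.
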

\begin{proof} Let $u_1(t)$ and $u_2(t)$ be two solutions of the Navier--Stokes problem such that $F(u_1(t))\equiv F(u_2(t))$ for all $t\in\R$ and $F\in \bar{\Cal F}$. Then, since $\Cal F$ is pressure separating, we conclude that $p_1(t)\equiv p_2(t)$ for all $t\in\R$.
\par
Let $\bar u(t):=u_1(t)-u_2(t)$. Then this function solves the following  overdetermined  system of semilinear heat equations
\begin{equation}\label{3.3}
\Dt\bar u-\Dx\bar u+(u_1,\Nx)\bar u+(\bar u,\Nx) u_2=0, \ \bar u\big|_{\partial\Omega}=0, \ \divv \bar u=0
\end{equation}
for all $t\in\R$. Moreover, due to our extra vorticity functional, we also know that
\begin{equation}\label{3.vor0}
\curl\bar u(t,\mathtt x_0)\equiv0
\end{equation}
The next lemma is the key tool in the proof of Theorem \ref{Th3.1}. We assume for simplicity that $\Omega$ is simply connected, so the boundary $\partial\Omega$ is a closed smooth curve without intersections $\gamma:=\{(x(s),y(s)),\ s\in[0,L]\}$, where $s$ is a natural parameter. In a general case, we just need to consider the  connected component of the boundary which contains the point $\mathtt x_0$. We also need to fix the orientation. for definiteness, we do in such way that $\gamma(s)$ move clock-wise along $\partial\Omega$ when $s$ grows.
\begin{lemma}\label{Lem1.2} Under the assumptions of the theorem,
\begin{equation}\label{3.4}
\bar u(t)\big|_{\partial\Omega}=\partial_n\bar u\big|_{\partial\Omega}=0
\end{equation}
for all $t\in\R$.
\end{lemma}
\begin{proof}[Proof of the lemma] We first note that, due to the parabolic smoothing for the linearized Navier--Stokes equations, $\bar u(t)\in H^3(\Omega)$ for all $t\in\R$, so all traces of the second derivatives on the boundary involved in the proof below  make sense.
\par
The idea of the proof is to derive differential equations for the normal derivatives
 $\partial_n \bar u(s)$  and to verify that these equations possess only zero solution. To this end, we need to remind some standard formulas for tangential and normal derivatives. Let $U$ be a smooth function in $\bar\Omega$. Then, obviously
 \begin{equation}\label{3.5}
 \partial_\tau U\big|_{\partial\Omega}=\frac d{ds}U(x(s),y(s))=x'\partial_x U+y'\partial_y U,
 \ \partial_n U=-y'\partial_x U+x'\partial_y U.
 \end{equation}
 In the last formula we have implicitly used that $n(s):=(-y'(s),x'(s))$ is a unit outer normal vector to the boundary. We now compute the tangential derivatives of quantities \eqref{3.5} using that 
 $$
 n'(s)=(-y''(s),x''(s))=\kappa(s)(x'(s),y'(s)),
 $$
  where $\kappa(s)$ is the curvature of the curve $\gamma(s)$. Namely,
\begin{multline}\label{3.6}
 \partial_\tau(\partial_\tau U)=\frac d{ds}(x'\partial_x U+y'\partial_yU)=\\=
 x''\partial_x U+y''\partial_y U+
 (x')^2\partial_{xx}U+2x'y'\partial_{xy}U+(y')^2\partial_{yy}U=\\=
 (x')^2\partial_{xx}U+2x'y'\partial_{xy}U+(y')^2\partial_{yy}U-\kappa(s)\partial_nU.
 \end{multline}
Analogously,
\begin{multline}\label{3.7}
\partial_\tau(\partial_n U)=\frac d{ds}(-y'\partial_xU+x'\partial_yU)=\\=\kappa(s)\partial_\tau U+x'y'(\partial_{yy}U-\partial_{xx}U)+((x')^2-(y')^2)\partial_{xy}U.
\end{multline}
We now return to equations \eqref{3.3}. Let $\bar u=(v,w)$. Then, we have 6 equations for determining
the traces of the second derivatives of $v$ and $w$ on the boundary:
\begin{equation}\label{3.8}
\Dx v\big|_{\partial\Omega}=\Dx w\big|_{\partial\Omega}=\partial_x\divv\bar u\big|_{\partial\Omega}=\partial_y\divv\bar u\big|_{\partial\Omega}=\partial_{\tau,\tau}v=\partial_{\tau,\tau}w=0
\end{equation}
for all $t\in\R$. The first two equations are obtained from \eqref{3.3} and the rest two are due to Dirichlet boundary conditions. Let us express all second derivatives through the mixed derivatives $A=\partial_{xy}v$ and $B=\partial_{xy}w$ of $v$ and $w$ respectively using the first 4 equations of \eqref{3.8}. This gives
$$
\partial_{xx}v=-\partial_{xy}w=-B,\ \partial_{yy}v=B,\ \partial_{xx}w=A,\ \ \partial_{yy}w=-A
$$
Inserting these relations in the last two equations of \eqref{3.8} and using \eqref{3.6}, we end up with the following system on $A$ and $B$:
\begin{multline}\label{3.9}
((y')^2-(x')^2)B+2x'y' A=\kappa(s)\partial_nv,\\  2x'y'B-((y')^2-(x')^2)A=\kappa(s)\partial_nw.
\end{multline}
We see that the determinant of this system is non-zero ($-\det=((x')^2+(y')^2)^2=1$), therefore all second derivatives on the boundary can be expressed in terms of first normal derivatives of $v$ and $w$. We are specially interested in such expressions for $\partial_\tau(\partial_n\bar u)$ which will allow us to get the closed system for determining first normal derivatives. Using formula \eqref{3.7} and the fact that tangential derivatives vanish, we get
$$
\partial_\tau(\partial_n v)=2x'y'B-((y')^2-(x')^2)A=\kappa(s)\partial_nw
$$
and, analogously,
$$
\partial_\tau(\partial_nw)=-2x'y'A+((x')^2-(y')^2)B=-\kappa(s)\partial_nv.
$$
Thus, the functions $X(s):=\partial_nv\big|_{\partial\Omega}$ and $Y(s):=\partial_nw\big|_{\partial\Omega}$ satisfy the system
$$
X'(s)=\kappa(s)Y(s),\ \ Y'(s)=-\kappa(s)X(s).
$$
Note that these equations coincide with the Frenet-Sarret equations for $x'(s)$ and $y'(s)$ and has a general solution in the following form
\begin{multline}\label{3.10}
X(s)=R\cos(\Theta(s)+\theta_0),\ \ Y(s)=R\sin(\Theta(s)+\theta_0),\\ \Theta(s):=\int_0^s\kappa(\tau)\,d\tau.
\end{multline}
Moreover, we have the analogous formulas for $x'(s)$ and $y'(s)$:
\begin{equation}\label{3.11}
x'(s)=\cos(\Theta(s)+\theta_0'),\ \ y'(s)=\sin(\Theta(s)+\theta_0').
\end{equation}
In order to determine the phases, we look at the divergence free condition. Since $\partial_\tau U\big|_{\partial\Omega}=0$, we have $\Nx U\big|_{\partial\Omega}=\partial_n U n$ and therefore (see \eqref{3.5}),
$$
\partial_x U=-y'(s)\partial_n U,\ \ \partial_y U=x'(s)\partial_nU.
$$
Then
\begin{multline}
\partial_x v+\partial_yw=-y'\partial_nv+x'\partial_n w=-R\cos(\Theta+\theta_0)\sin(\Theta+\theta_0')+\\+
R\cos(\Theta+\theta_0')\sin(\Theta+\theta_0)=R\sin(\theta_0-\theta_0')=0.
\end{multline}
Thus, the vectors $(X(s),Y(s))$ and $\tau(s)=(x'(s),y'(s))$ must be parallel, so we  get
$$
X(s)=Rx'(s),\ \ Y(s)=Ry'(s).
$$
Let us now compute the vorticity $\omega(s):=\partial_{y}v-\partial_xw$ at the boundary:
$$
\omega(s)=x'(s)Rx'(s)+y'(s)Ry'(s)=R((x'(s))^2+(y'(s))^2)=R.
$$
Thus, the trace of the vorticity of $\bar u$ at the boundary is independent of $s$ and we have found that
$$
\partial_n\bar u(t)\big|_{\partial\Omega}=R(t)\tau(s),\ \ \curl(\bar u(t))\big|_{\partial\Omega}=R(t)
$$
and from \eqref{3.vor0}, we get $\partial_n\bar u\big|_{\partial\Omega}\equiv0$
and finish the proof of the lemma.
\end{proof}
We are now ready  to complete the proof of the theorem. Indeed, due to the lemma, under the assumptions of the theorem, the function $\bar u$ satisfies the linear second order parabolic system \eqref{3.3} with {\it overdetermined} boundary conditions
$$
\bar u\big|_{\partial\Omega}=\partial_n\bar u\big|_{\partial\Omega}=0
$$
and must vanish due to the  proper Carleman estimate. In particular, we may use the following observability estimate established in \cite{Zu}.
\begin{proposition} Let $\tilde{\Omega}\subset\R^n$ be a bounded domain with smooth boundary and $\omega\subset\subset\tilde{\Omega}$. Let $\varphi\in C(0,T;[L^2(\tilde{\Omega})]^k)$ be a weak solution of the following linear parabolic system:
\begin{equation}\label{3.par}
\Dt\varphi-\nu\Dx\varphi=a_0(t,\mathtt x)\varphi+\sum_{i=1}^na_i(t,\mathtt x)\partial_{x_i}\varphi,\ \ \varphi\big|_{\partial\tilde{\Omega}}=0,\ \ \varphi\big|_{t=0}=\varphi_0,
\end{equation}
where the diffusion matrix $\nu=\nu \operatorname{Id}$, $\nu>0$ and $k\times k$ matrices $a_i$ satisfy
\begin{equation}\label{3.cond}
a_i\in L^\infty((0,T)\times\tilde{\Omega}), \ \ i=1,\cdots, n; \ \ a_0\in L^\infty(0,T;L^p(\tilde{\Omega})),\ \ p\ge n.
\end{equation}
Then, the following estimate holds for every $T>0$:
\begin{equation}\label{3.obs}
\|\varphi(T)\|_{L^2(\tilde{\Omega})}\le C_T\int_0^T\|\varphi(t)\|^2_{L^2(\omega)}\,dt,
\end{equation}
where the constant $C_T$ depends on $T$, $\tilde{\Omega}$, $\omega$ and the norms of $a_i$ used in \eqref{3.cond}, but is independent of the solution $\varphi$.
\end{proposition}
We want to apply this result to equations \eqref{3.3}. Note, first of all that $u_1(t)$ and $u_2(t)$ belong to $H^2(\Omega)$ for every fixed $t\in\R$, so conditions \eqref{3.cond} are satisfied. The next standard observation is that, since $\bar u$ satisfies zero Dirichlet and Neumann boundary conditions, we may extend $\bar u(t)$ by zero through the boundary $\partial\Omega$ (or through the connected component of the boundary which contains $\mathtt x_0$). Then, we get the extension $\varphi(t)$ of the function $\bar u(t)$ to a bigger domain $\tilde{\Omega}$ such that the interior of $\tilde{\Omega}\setminus\Omega$ is not empty and, therefore, contains a subdomain $\omega$. Note that $\varphi(t)$ thus defined solves \eqref{3.par} in $\tilde{\Omega}$ and $\varphi(t)\big|_\omega\equiv0$. The observability estimate \eqref{3.obs} ensure us that $\varphi(t)\equiv0$. Thus, $u_1(t)\equiv u_2(t)$ for all $t\in\R$ and the theorem is proved.
\end{proof}

\section{Concluding remarks and open problems}\label{s4}
In this section, we briefly discuss the obtained results and related open problems. We start with the most challenging and most interesting one.
\begin{remark} As we have already mentioned, in the case of generically chosen $g$ (i.e., when the number of solutions of the corresponding stationary Navier--Stokes problem is finite), there exists a single determining functional, see \cite{KKZ}. The key question here is whether or not this functional can be chosen depending on pressure only (i.e., in the form of $F(u)=\tilde F(P(u))$)?
\par
Since the proof of the result from \cite{KKZ} is based on the Takens delay embedding theorem, one of the possible way to tackle this problem is to consider the generalizations of the Takens theorem to the case where the observables are allowed to be taken only from some subspace/manifold in the space of all functionals (like $F(P(u))$ in our case). Unfortunately, we are not aware about any results in this direction which may help to solve the problem.
\par
Alternatively, we may try to rewrite the Navier--Stokes equation in terms of an evolutionary problem for pressure and  to apply the Takens theorem directly to this problem. To this end, we need to clarify the possibility to restore the velocity vector field $u(t,\mathtt x)$ if the pressure field $p(t,\mathtt x)$ is known. Of course, this question has a substantial independent interest since it would allow to present the classical Navier--Stokes system in an essentially different way, see \cite{R} for more details.
\par
Let us assume for simplicity that $\Omega$ is simply connected. Then the velocity field $u$ can be presented in the form $u=(\partial_{y}\Psi,-\partial_{x}\Psi)=:\nabla^{\bot}\Psi$, where $\Psi$ is a stream function which satisfies the boundary conditions
\begin{equation}\label{3.psi}
\Psi\big|_{\partial\Omega}=\partial_n\Psi\big|_{\partial\Omega}=0.
\end{equation}
Moreover, in the stream function formulation, equation \eqref{1.P} transforms to the Monge--Amp\'ere equation for $\Psi$:
\begin{equation}\label{3.monge}
\partial_{xx}\Psi\partial_{yy}\Psi-(\partial_{xy}\Psi)^2=\Dx p
\end{equation}
Thus, the problem is reduced to the uniqueness problem for  Monge--Amp\'ere equation. Unfortunately, it is known that this equation is usually of a mixed type, see e.g. \cite{R}, so we cannot use the classical theory which requires the ellipticity assumption $\Dx p\ge0$, see \cite{MA}. Instead, we may consider only smooth solutions and also we have overdetermined boundary conditions \eqref{3.psi} which a priori may help, but it is not clear how to use this extra information.
\end{remark}
\begin{remark} The next natural question is whether the extra vorticity functional $\Omega(u)$ is really necessary. Surprisingly, our technique allows us to remove this functional in the case where the boundary $\partial\Omega$ is not smooth and have edges. For instance, let $\Omega$ be a polygonal domain. Then, due to the analogue of Lemma \ref{Lem1.2} without extra assumption \ref{3.vor0}, we have
\begin{equation}\label{3.bound}
\Nx \bar u(t)\big|_{\partial\Omega}=R(t)\tau(s),
\end{equation}
 so $\Nx \bar u(t)\big|_{\partial\Omega}\in H^{1/2-\eb}(\partial\Omega)$ for small positive $\eb$. On the other hand, differentiating system \eqref{3.3} in $\mathtt x$ and using the $H^s$-regularity theorem ($s\le 3/2$) for solutions of the Laplace equations in polyhedral domains, see e.g. \cite{NP}, one can easily check that $\varphi(t):=\Nx\bar u(t)\in H^{3/2-\eb}(\Omega)\subset C(\bar\Omega)$.
 However, this continuity contradicts \eqref{3.bound} unless $R(t)\equiv0$.
\par
The above arguments are strongly based on the fact that the tangent vector $\tau(s)$ is discontinuous at the edge points and will not work if the domain $\Omega$ is smooth where we need the extra vorticity functional. On the other hand, we believe that this assumption is technical and the result should remain true without it in the case of smooth domains as well.
\par
One more natural open question is what happens in the cases where the boundary is empty, i.e., in the case of periodic boundary conditions. Recall that in the stream function formulation the Navier--Stokes system reads
\begin{equation}\label{3.str}
\Dt\Dx\Psi-J(\Psi,\Dx\Psi)=\nu\Dx^2\Psi+\curl g,\ \ \Psi\big|_{\partial\Omega}=\partial_n\Psi\big|_{\partial\Omega}=0,
\end{equation}
where $J(\Psi,\Dx\Psi);=\partial_x\Psi\partial_y\Dx\Psi-\partial_y\Psi\Dx\Psi$.  Therefore, if the pressure field $p(t)$ is given, this equation together with the Monge--Amp\'ere equation \eqref{3.monge}, give us the overdetermined system to be solved in order to find the stream function $\Psi$. Our approach (translated to the stream function language) is to verify that $\curl u\big|_{\partial\Omega}=\Dx\Psi\big|_{\partial\Omega}=0$ and to get a parabolic equation/system with overdetermined boundary conditions. This approach does not use the Monge--Amp\'ere equation inside of $\Omega$ at all and clearly can not be applied if the boundary is empty. It would be interesting to study the overdetermined system \eqref{3.str} and \eqref{3.monge} for the case of periodic boundary conditions and to get the analogous results for the possibility to reconstruct velocity from pressure.
\end{remark}
\begin{remark} To conclude, we mention one more interesting connection of the proved result with the trajectory approach to the study of evolutionary equations without uniqueness developed in \cite{CV02}, see also \cite{Z23} and references therein. For simplicity, we assume that the domain $\Omega$ is polygonal, so we do not need  the additional vorticity functional to restore the velocity from pressure.  Following the general theory, we define the trajectory phase space $K^+_u$ for the Navier--Stokes problem \eqref{1.Pro} as the set of all solutions of it belonging to the space $C(\R_+,H\cap H^2(\Omega)\cap H^1_0(\Omega))$:
$$
K^+_u:=\{u\in C(\R_+,H\cap H^2(\Omega)\cap H^1_0(\Omega)),\ u \text{ solves \eqref{1.Pro}}\}.
$$
Then, the semigroup $T_h$, $h\in\R_+$, of temporal shifts ($(T_hu)(t):=u(t+h)$) naturally acts on $K^+_u$ and define the {\it trajectory} dynamical system $(T_h,K^+_u)$ associated with the initial Navier--Stokes problem. Since in 2D case the solution is unique, the solution map $S:H\cap H^2(\Omega)\cap H^1_0(\Omega)\to K^+_u$ will be one-to-one. Moreover, if we endow the space $K^+_u$ by the local in time topology (the topology of $C_{loc}([0,\infty),H\times H^2(\Omega)\times H^1(\Omega))$, the map $S$ becomes a homeomorphism. Thus, in this case
$$
T_h=S\circ S(h)\circ S^{-1},\ \ T_h: K^+_u\to K^+_u,
$$
where $S(t)$ is the standard solution semigroup associated with the Navier--Stokes equation. Thus, the trajectory dynamical system $(T_h,K^+_u)$ is homeomorphic to the standard one and, since $S(t)$ possesses a global attractor $\Cal A$, the associated trajectory dynamical system also possesses a global attractor
$$
\Cal A^{tr}_u=S\Cal A
$$
 which is usually referred as a {\it trajectory} attractor associated with \eqref{1.Pro}.
\par
We now pass to the pressure component and define $K^+_p:=P(K^+_u)$ where $P(u)$ is defined in \eqref{1.2P}. Roughly speaking, $K^+_p$ consists of all pressure components of semitrajectories generated by \eqref{1.Pro}. Since we do not know whether the operator $P$ is one-to-one, we cannot define the standard evolution semigroup for the evolution of the pressure component, but, analogously to problems without uniqueness, we may define the associated trajectory dynamical system $(T_h,K^+_p)$ and verify the existence of an attractor $\Cal A^{tr}_p$ which will be referred as the trajectory attractor for the pressure component.
\par
Note that the observability estimate \eqref{3.obs} allows to establish the uniqueness on a semi-interval $t\in\R_+$ as well, so we conclude that the map $P:K^+_u\to K^+_p$ is one-to-one. Finally, utilizing the compactness, we establish that the restriction of $P$ to the trajectory attractor $\Cal A^{tr}_u$ gives us a homeomorphism  $\Cal A\sim\Cal A^{tr}_u\sim\Cal A^{tr}_p$. It would be interesting to understand whether and how the trajectory attractors theory may help to tackle the problems related with determining functionals.
\end{remark}

\end{document}